\documentclass[12pt,reqno]{article}
\usepackage{fullpage}
\usepackage{float}
\usepackage{upgreek}
\usepackage{booktabs,enumerate}
\usepackage{amsthm,amsmath,amssymb}
\usepackage{tikz}
\usepackage{subfigure}
\usepackage{multirow,multicol}
\usepackage[export]{adjustbox}
\definecolor{webgreen}{rgb}{0,.5,0}
\definecolor{webbrown}{rgb}{.6,0,0}
\usepackage[colorlinks=true,
linkcolor=webgreen,
filecolor=webbrown,
citecolor=webgreen]{hyperref}

\newcommand{\seqnum}[1]{\href{http://oeis.org/#1}{\underline{#1}}}
\newcommand{\Figs}[1]{\hyperref[#1]{Figure~\ref*{#1}}}
\newcommand{\Tabs}[1]{\hyperref[#1]{Table~\ref*{#1}}}
\newcommand{\Sec}[1]{\hyperref[#1]{section~\ref*{#1}}}
\newcommand{\Prop}[1]{\hyperref[#1]{Proposition~\ref*{#1}}}

\everymath{\displaystyle}

\theoremstyle{plain}
\newtheorem{theorem}{Theorem}
\newtheorem{lemma}[theorem]{Lemma}

\newtheorem{proposition}[theorem]{Proposition}

\theoremstyle{remark}
\newtheorem{remark}[theorem]{Remark}

\title{\bf  A generating polynomial for the pretzel knot}
\author{Franck Ramaharo\\
\small D\'epartement de Math\'ematiques et Informatique\\[-0.8ex]
\small Universit\'e d'Antananarivo\\[-0.8ex] 
\small 101 Antananarivo, Madagascar\\
\small\href{mailto:franck.ramaharo@gmail.com}{\tt franck.ramaharo@gmail.com}\\
}

\date{\small\today\\}

\begin{document}

\maketitle

\begin{abstract}
We collect statistics which consist of the coefficients in the expansion of the generating polynomials that count the Kauffman states associated with certain classes of  pretzel knots having $ n $ tangles, of $ r $ half-twists respectively. 

\bigskip\noindent  {Keywords:} generating polynomial, shadow diagram, Kauffman state.
\end{abstract}

\section{Introduction}
The \textit{generating polynomial} for the shadow diagram of  the knot $ K $  provides a refinement of counting the corresponding Kauffman states \cite{Kauffman}. By state  is meant the diagram obtained by \textit{splitting} each vertex representing  the initial diagram crossings, i.e., each  \protect\includegraphics[width=.03\linewidth,valign=c]{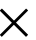} to \protect\includegraphics[width=.03\linewidth,valign=c]{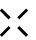},  and repasting the edges as either \protect\includegraphics[width=.03\linewidth,valign=c]{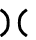} or \protect\includegraphics[width=.035\linewidth,valign=c]{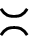}. The generating polynomial for the  knot $ K $ is then defined as the summation  which is taken over all its states, namely
\begin{equation}\label{eq:genpoly}
K(x)=\sum_{S}^{} x^{|S|},
\end{equation}
with  $ |S| $ denoting the number of Jordan curves in the state diagram $ S $. For instance, the generating polynomial for the Hopf link is $ L(x)=2x^2+2x $ (see \Figs{Fig:1LinkStates}).

\begin{figure}[ht]
\centering
\includegraphics[width=.4\linewidth]{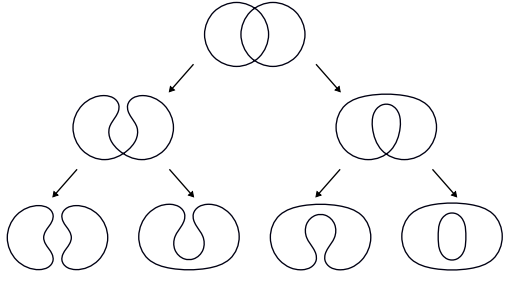}
\caption{The states of the Hopf link.}
\label{Fig:1LinkStates}
\end{figure}

If $ K $ and $ K' $ are two arbitrary diagrams, and $ \bigcirc $ denotes the unknot diagram, then we have the following properties and notations \cite{Ramaharo}:
\begin{enumerate}[(i)]
\item $ \bigcirc(x)=x $;
\item $ (\underbrace{\bigcirc\sqcup\bigcirc\sqcup\cdots\sqcup\bigcirc}_{n\ copies})(x):=\left(\bigcirc\bigcirc\cdots\bigcirc\right)(x)=x^n $;
\item $ \left(K\sqcup\bigcirc\right)(x)=xK(x) $;
\item $ \left(K\sqcup K'\right)(x)=K(x).K'(x) $;
\item $ \left(K\#\bigcirc\right) (x)=K(x)$;
\item $ K_n(x):=(\underbrace{K\#K\#\cdots\#K}_{n\ copies})(x)=x\left(x^{-1}K(x)\right)^n $, with $ K_0=\bigcirc $;\label{item:Dn}
\item $ \left(K\#K'\right)(x)=x^{-1}K(x).K'(x) $, 
\end{enumerate}
where $ \# $ and $ \sqcup $ are respectively the connected sum and the disjoint union. Moreover, if we let $ \overline{K} $ denote the closure of $ K $, i.e., the connected sum with itself, then there exist two polynomials $ \upalpha,\upbeta\in\mathbb{N}[x]$  such that 
\begin{equation}\label{eq:alphabeta}
\overline{K}(x)=x^2\upalpha(x)+x\upbeta(x),\
\mbox{with}\
{K}(x)=x^2\upbeta(x)   +x\upalpha(x).
\end{equation}

With the notation and property in \eqref{item:Dn} we obtain $ \overline{K_0}(x)=\left(\bigcirc\bigcirc\right)(x)=x^2 $ and
\begin{align}
\overline{K_n}(x)&=\upalpha(x)\overline{K_{n-1}}(x)+\upbeta(x)K_{n-1}(x)\label{eq:gp1}\\
&=\big(\upalpha(x)+x\upbeta(x)\big)^n+\left(x^2-1\right)\upalpha(x)^n\label{eq:gp2}.
\end{align}

We can interpret \eqref{eq:alphabeta} as follows: given the closure \protect\includegraphics[width=.07\linewidth,valign=c]{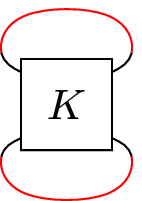} of a knot diagram \protect\includegraphics[width=.09\linewidth,valign=c]{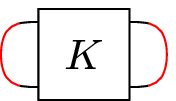}, its  state diagrams  can be divided into two subsets that are respectively counted by $ x^2\upalpha(x) $ and $ x\upbeta(x) $ as represented in \Figs{Fig:split}. 

\begin{figure}[H]
\centering
\hspace*{\fill}
\subfigure[States counted by $ x^2\upalpha(x) $.]{\includegraphics[width=0.28\linewidth]{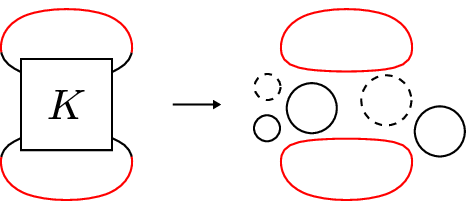}}\hfill%
\subfigure[States counted by $ x\upbeta(x) $.]{\includegraphics[width=0.28\linewidth]{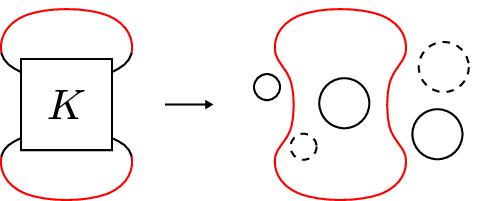}}
\hspace*{\fill}
\caption{The two subsets of states associated with the closure of a knot.}
\label{Fig:split}
\end{figure}

In this note, we shall take advantage of these properties and  establish the generating polynomial for a particular class of the pretzel knots.

\section{Pretzel knot}\label{sec:pretzel}
A \textit{pretzel knot} $ P_{n,r}:=P(r,r,\ldots,r) $ \cite{Stoimenow} is a knot composed of $ n $ pairs of strands twisted $  r $ times and attached along the tops and bottoms as in \Figs{Fig:PretzelLink}\subref{subfig:pretzel}.

\begin{figure}[ht]
\centering
\hspace*{\fill}
\subfigure[$P(r,r,\ldots,r)$]{\includegraphics[width=0.3\linewidth]{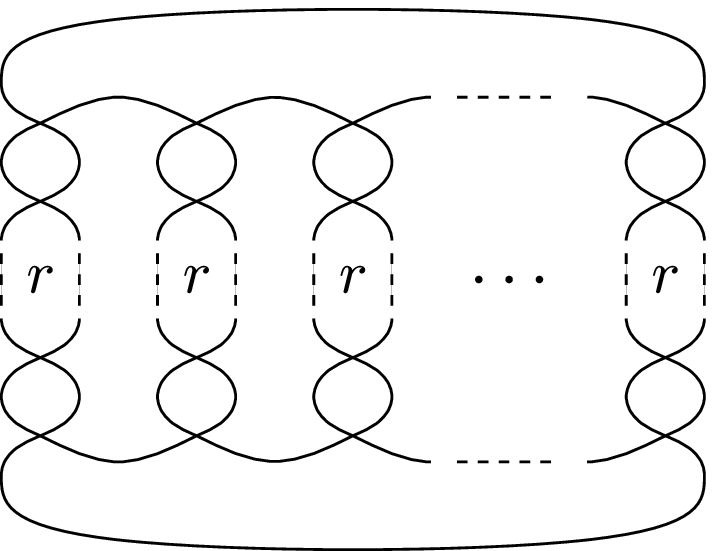}\label{subfig:pretzel}}\hfill%
\subfigure[$P(r,r,\ldots,r):= \overline{F_r\#F_r\#\cdots\#F_r}$]{\includegraphics[width=0.5875\linewidth]{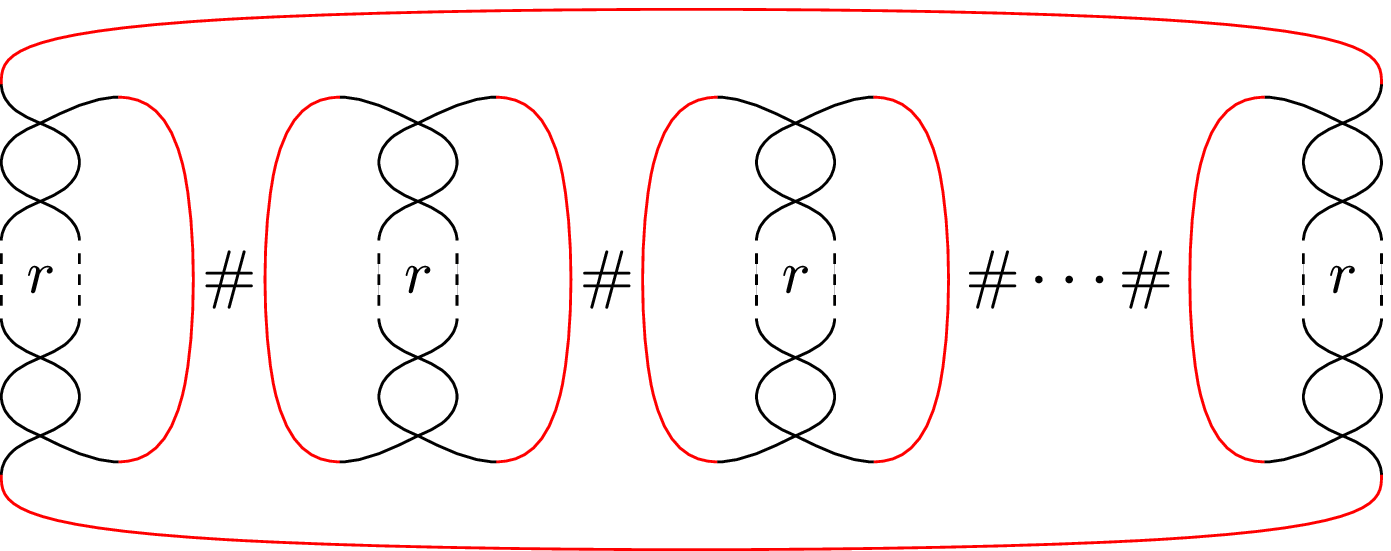}\label{subfig:Fr}}
\hspace*{\fill}
\caption{The shadow diagram for the pretzel knot  and the corresponding connected sums for constructing it.}
\label{Fig:PretzelLink}
\end{figure}

\begin{figure}[ht]
\centering
\hspace*{\fill}
\subfigure[$P_{n,0}$]{\includegraphics[width=0.25\linewidth]{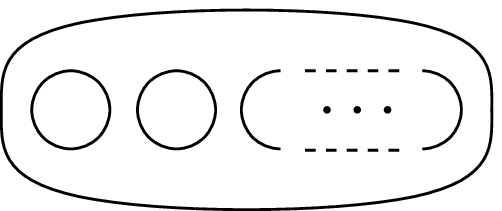}\label{subfig:P0n}}\hfill %
\subfigure[$P_{n,1}$]{\includegraphics[width=0.2\linewidth]{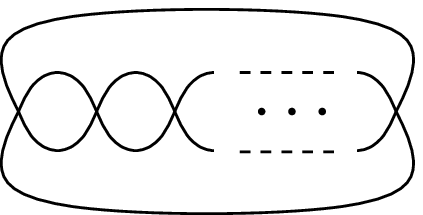}\label{subfig:P1n}}\hfill%
\subfigure[$P_{n,2} $]{\includegraphics[width=0.35\linewidth]{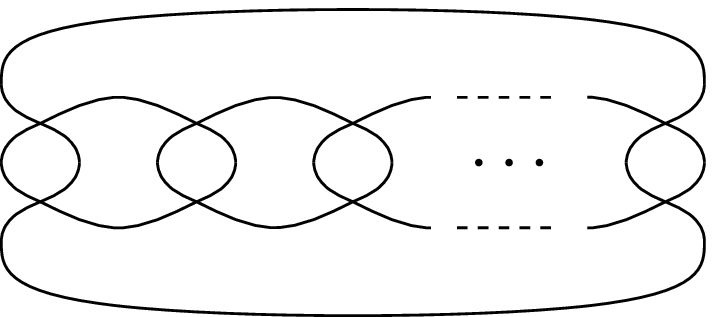}\label{subfig:P2n}}\hfill %
\hspace*{\fill}
\\
\hspace*{\fill}
\subfigure[$ P_{n,3}$]{\includegraphics[width=0.3\linewidth]{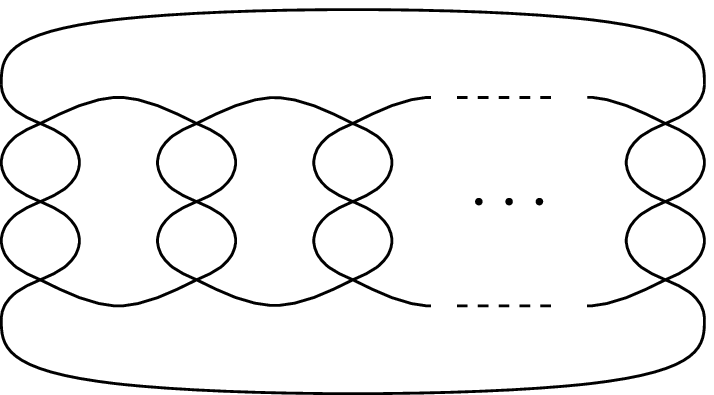}\label{subfig:P3n}}\hfill%
\subfigure[$ P_{1,r}$]{\includegraphics[width=0.1125\linewidth]{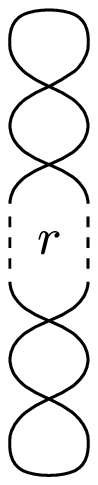}\label{subfig:Pr1}}\hfill%
\subfigure[$P_{2,r}$]{\includegraphics[width=0.11\linewidth]{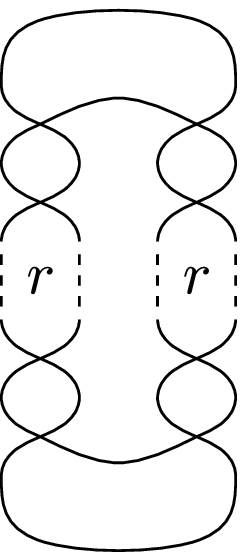}\label{subfig:Pr2}}\hfill%
\hspace*{\fill}
\caption{For some values of $ n $ and $ r $ we have: \subref{subfig:P0n} a disjoint union of $ n $ unknots ($ r=0 $, $ n\geq 1 $); \subref{subfig:P1n}  an \textit{$ n $-foil} ($ r=1 $); \subref{subfig:P1n}  an \textit{$ n $-chain link} ($ r=2 $); \subref{subfig:P2n} an \textit{$n $-sinnet of square knotting} $ (r=3) $; \subref{subfig:Pr1} an \textit{$ r $-twist loop} ($ n=1 $) and \subref{subfig:Pr2} a $ 2r $-foil ($ n=2 $).}
\label{Fig:somePnr}
\end{figure}

If $ F_r $ denotes the \textit{$ r $-foil} as pictured in \Figs{Fig:PretzelLink}\subref{subfig:Fr}, then we have $ P_{n,r}:=\overline{\left(F_r\right)_{n}} $. For the convenience, we set $ F_0=\bigcirc\bigcirc $ and $ \left(F_r\right)_0=\bigcirc $  so that $ \overline{\left(F_{0}\right)_0}=\bigcirc \bigcirc $ and $ \overline{\left(F_{0}\right)_n}=\protect\includegraphics[width=.15\linewidth,valign=c]{0r-pretzel} $ for $ n\geq 1 $. \Figs{Fig:somePnr} displays some pretzel knots for small values of $ n $ and $ r $.

\section{Generating polynomial}
We begin with the generating polynomial for the closure of the $ r $-foil  (see \Figs{Fig:PretzelLink}\subref{subfig:Fr}) which yields the $ r $-twist loop  (see \Figs{Fig:somePnr}\subref{subfig:Pr1}).

\begin{lemma}[\cite{Ramaharo}]
The generating polynomials for the $ r $-twist loop and the $ r $-foil knot are respectively given by 
\begin{equation}\label{eq:Tnx2}
T_r(x)=x(x+1)^r
\end{equation}
and
\begin{equation}\label{eq:Fnx2}
F_r(x)=\overline{T_r}(x)=(x+1)^r+x^2-1.
\end{equation}
\end{lemma}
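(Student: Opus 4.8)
The plan is to compute $T_r(x)$ by induction on $r$, using the recursive structure of the $r$-foil as $r$ consecutive half-twists in a single tangle, and then to obtain $F_r(x)$ by applying the closure relation \eqref{eq:alphabeta}.

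First I would establish the base case. For $r=0$ the $r$-twist loop is the unknot, so by property (i) we have $T_0(x)=x=x(x+1)^0$, as claimed. Next I would set up the inductive step: the $(r{+}1)$-twist loop is obtained from the $r$-twist loop by adding one more crossing (one more half-twist). Splitting this new crossing in its two ways produces, on the one hand, the $r$-twist loop unchanged, and on the other hand, the $r$-twist loop with an extra disjoint Jordan curve — equivalently $T_r\sqcup\bigcirc$. Using property (iii), $\left(T_r\sqcup\bigcirc\right)(x)=xT_r(x)$, this gives the recurrence
\begin{equation*}
T_{r+1}(x)=T_r(x)+xT_r(x)=(x+1)T_r(x),
\end{equation*}
and the closed form $T_r(x)=x(x+1)^r$ follows immediately by induction. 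I would double-check the effect of the extra crossing against the Hopf link computation $L(x)=2x^2+2x$ quoted in the introduction, noting that $F_1$ (the closure of the $1$-twist loop) should be the Hopf link shadow: indeed \eqref{eq:Fnx2} gives $F_1(x)=(x+1)+x^2-1=x^2+x$, which is consistent once one accounts for the two crossings of the Hopf diagram versus the one crossing here, so the sanity check is really $\overline{T_2}(x)$ or a direct re-examination of the figure.

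For the second formula, I would read off $\upalpha$ and $\upbeta$ for the $r$-twist loop from \eqref{eq:alphabeta}: writing $T_r(x)=x(x+1)^r=x^2\upbeta(x)+x\upalpha(x)$, I need to split $(x+1)^r$ into its part divisible by $x$ and its constant term. Since $(x+1)^r=1+\bigl((x+1)^r-1\bigr)$ and $(x+1)^r-1$ is divisible by $x$, we get $\upalpha(x)=1$ and $\upbeta(x)=\dfrac{(x+1)^r-1}{x}$. Then \eqref{eq:alphabeta} gives
\begin{equation*}
F_r(x)=\overline{T_r}(x)=x^2\upalpha(x)+x\upbeta(x)=x^2+\bigl((x+1)^r-1\bigr)=(x+1)^r+x^2-1,
\end{equation*}
which is \eqref{eq:Fnx2}. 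Alternatively, one can bypass $\upalpha,\upbeta$ entirely by using \eqref{eq:gp2} with $n=1$: $\overline{K_1}(x)=\bigl(\upalpha(x)+x\upbeta(x)\bigr)+(x^2-1)\upalpha(x)$, which with $\upalpha+x\upbeta=(x+1)^r$ and $\upalpha=1$ reduces to the same answer; but extracting $\upalpha$ and $\upbeta$ correctly is unavoidable either way.

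The main obstacle is the inductive step for $T_r$: one must verify carefully that adding a half-twist to the twist-loop diagram really does split into exactly "the same diagram" plus "the same diagram with one extra loop," and in particular that no other curve counts change. This is a purely diagrammatic claim about the two smoothings of a single crossing inserted into a two-strand band, and making it rigorous is the only place where geometric care (rather than bookkeeping with the algebraic properties (i)--(vii)) is needed; everything after that is routine algebra.
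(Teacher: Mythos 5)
The paper itself gives no proof of this lemma (it is quoted from \cite{Ramaharo}), so your argument can only be compared with the surrounding machinery, in particular the lemma that immediately follows. Your first half is sound and is surely the intended argument: $T_0(x)=x$, and splitting the extra crossing of the twist region gives one smoothing that reproduces the $r$-twist loop and one that yields $T_r\sqcup\bigcirc$, hence $T_{r+1}(x)=(1+x)T_r(x)$ and \eqref{eq:Tnx2} by induction.

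The second half, however, has a genuine gap. You determine $\upalpha$ and $\upbeta$ for the pair $(T_r,\overline{T_r})$ purely algebraically, by splitting $(x+1)^r$ into its constant term and its part divisible by $x$. But \eqref{eq:alphabeta} only asserts the existence of one specific, diagrammatically defined pair $(\upalpha,\upbeta)$ --- the two classes of states pictured in \Figs{Fig:split} --- satisfying both identities simultaneously; it does not license feeding an arbitrary decomposition $K(x)=x^2\upbeta(x)+x\upalpha(x)$ back into the formula for $\overline{K}(x)$. Such decompositions are not unique: for $r=2$ one has $T_2(x)=x^3+2x^2+x=x^2(x+1)+x(x+1)$, i.e., $\upalpha=\upbeta=x+1$ is an equally valid algebraic splitting in $\mathbb{N}[x]$, yet it would predict $\overline{T_2}(x)=x^3+2x^2+x$ instead of the correct $F_2(x)=2x^2+2x$. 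So the identification $\upalpha=1$ must be justified by looking at the states themselves, which is exactly what the paper does in its next lemma: there $\upbeta_r=1$ (for the pair $F_r$, $\overline{F_r}=T_r$) is obtained from the observation that exactly one state of the $r$-twist loop is of the relevant kind (\Figs{Fig:alphabeta}). Once you insert the analogous one-line diagrammatic observation for your orientation of the pair, your computation goes through and essentially reproduces the paper's own derivation with the roles of $K$ and $\overline{K}$ interchanged. (Your Hopf-link sanity check is also slightly misplaced, since $F_1$ is the one-crossing foil while $F_2$ is the Hopf shadow, as you noticed yourself; it is harmless but could simply be dropped.)
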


We shall deduce the two polynomials $ \upalpha_r $, $ \upbeta_r $ associated with closure of the $ r $-foil with the help of formula \eqref{eq:Tnx2}.

\begin{lemma}
The following expression holds for $ \overline{F_r}(x) $
\begin{equation}\label{eq:alphadef}
\overline{F_r}(x)=x^2\upalpha_r(x)+x \upbeta_{r}(x),
\end{equation}
where $\upalpha_r(x):=\dfrac{(x+1)^r-1}{x} $ and $ \upbeta_{r}(x)=1 $.
\end{lemma}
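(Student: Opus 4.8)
The plan is to read $\upalpha_r$ and $\upbeta_r$ directly off the decomposition~\eqref{eq:alphabeta} applied to the shadow diagram $F_r$, using the two generating polynomials already recorded in the preceding lemma. The only ingredient not immediately on hand is $\overline{F_r}(x)$, and I would supply it by noting that closing the $r$-foil reproduces the $r$-twist loop: by \Sec{sec:pretzel} we have $\overline{F_r}=\overline{(F_r)_1}=P_{1,r}$, the diagram of \Figs{Fig:somePnr}\subref{subfig:Pr1}, so \eqref{eq:Tnx2} gives $\overline{F_r}(x)=T_r(x)=x(x+1)^r$, while \eqref{eq:Fnx2} gives $F_r(x)=(x+1)^r+x^2-1$.

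With these two polynomials in hand, I would invoke \eqref{eq:alphabeta} with $K=F_r$: it guarantees polynomials $\upalpha_r,\upbeta_r\in\mathbb{N}[x]$ with
\[
\overline{F_r}(x)=x^2\upalpha_r(x)+x\upbeta_r(x),\qquad F_r(x)=x^2\upbeta_r(x)+x\upalpha_r(x).
\]
Regarded as a linear system in the pair $(\upalpha_r,\upbeta_r)$ over $\mathbb{Q}(x)$, this has a $2\times2$ coefficient matrix of determinant $x^4-x^2\neq0$; hence $\upalpha_r$ and $\upbeta_r$ are uniquely determined by the two values above, and it suffices to exhibit one solution.

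The verification is then the sole computation, and a short one. For $\upalpha_r(x)=\dfrac{(x+1)^r-1}{x}$ and $\upbeta_r(x)=1$ one has $x^2\upalpha_r(x)+x\upbeta_r(x)=x\bigl((x+1)^r-1\bigr)+x=x(x+1)^r=\overline{F_r}(x)$ and $x^2\upbeta_r(x)+x\upalpha_r(x)=x^2+\bigl((x+1)^r-1\bigr)=F_r(x)$, so by uniqueness these are precisely the polynomials produced by~\eqref{eq:alphabeta}. I would also remark that $\upalpha_r(x)=\sum_{k=1}^{r}\binom{r}{k}x^{k-1}$ is genuinely a member of $\mathbb{N}[x]$, and $\upbeta_r=1$ trivially so, as \eqref{eq:alphabeta} predicts.

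The step I expect to require the most care — the rest being forced by the uniqueness of the decomposition — is the identification $\overline{F_r}=P_{1,r}$ at the level of shadow diagrams, i.e.\ that closing the $r$-foil returns the $r$-twist loop (equivalently $\overline{\overline{T_r}}=T_r$ as diagrams), so that its generating polynomial is the $T_r(x)$ of \eqref{eq:Tnx2} rather than something one would have to recompute. If one preferred to avoid this, an alternative is to argue combinatorially that exactly one Kauffman state of $\overline{F_r}$ lies in the subfamily counted by $x\upbeta_r(x)$ (as in \Figs{Fig:split}) and that it carries a single Jordan curve, which forces $\upbeta_r=1$; the companion identity $F_r(x)=x^2\upbeta_r(x)+x\upalpha_r(x)$ then gives $\upalpha_r(x)=\bigl(F_r(x)-x^2\bigr)/x=\bigl((x+1)^r-1\bigr)/x$ at once.
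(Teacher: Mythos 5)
Your proposal is correct, but it reaches the identification of $\upalpha_r$ and $\upbeta_r$ by a different mechanism than the paper. The paper's proof, after the same identification $\overline{F_r}=\overline{(F_r)_1}=P_{1,r}=T_r$, is combinatorial at the key step: it observes that among the states of the $r$-twist loop exactly one consists of a single Jordan curve (of the kind counted by $x\upbeta_r(x)$ in the decomposition of \eqref{eq:alphabeta}), which forces $\upbeta_r(x)=1$; it then reads $\upalpha_r(x)=\bigl((x+1)^r-1\bigr)/x$ off \eqref{eq:Tnx2} and confirms the interpretation by expanding $x^2\upalpha_r(x)$ as a sum of disjoint unions $\bigcirc\sqcup T_j$, matching the $2^r-1$ states of the other kind. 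You instead avoid all state-level reasoning: you feed both known polynomials, $\overline{F_r}(x)=T_r(x)$ from \eqref{eq:Tnx2} and $F_r(x)$ from \eqref{eq:Fnx2}, into the two identities of \eqref{eq:alphabeta}, note that the resulting $2\times 2$ linear system in $(\upalpha_r,\upbeta_r)$ has nonvanishing determinant $x^4-x^2$ over $\mathbb{Q}(x)$, and then verify that the claimed pair solves both equations, so uniqueness does the rest. This is a valid and arguably cleaner way to pin down both polynomials simultaneously (and your check that $\upalpha_r\in\mathbb{N}[x]$ is a sensible consistency remark), at the cost of invoking the companion identity for $F_r(x)$, whereas the paper's argument yields the diagrammatic meaning of $\upbeta_r=1$ directly, which is the picture its Figure of representative states is meant to convey; your closing ``alternative'' paragraph is in fact essentially the paper's own proof. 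Both routes rest on the same unproved-but-asserted geometric fact, which you rightly flag as the delicate point: that closing the $r$-foil returns the $r$-twist loop, so that $\overline{F_r}(x)$ is given by \eqref{eq:Tnx2} rather than needing a separate computation.
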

\begin{proof}
First, note that $ \overline{F_r}=\overline{\left(F_r\right)_1}=P_{1,r}=T_r $. Among the states of the $ r $-twist loop,  there is exactly one which is made up of one component as shown in 	\Figs{Fig:alphabeta}. Hence $ \upbeta_r(x)=1 $.
Then by \eqref{eq:Tnx2}, we get
\begin{equation*}
T_r(x)=x^2\left(\dfrac{(x+1)^r-1}{x}\right)+x.
\end{equation*}

In fact if we let  $ \upalpha_r(x)=\dfrac{(x+1)^r-1}{x} $, then the expansion of $ x^2\upalpha_r(x) $, namely
\begin{align*}
x^2\upalpha_r(x)&=x\Big(x(x+1)^0+x(x+1)^1+x(x+1)^2+\cdots+x(x+1)^{r-1}\Big)\\
&=\left(\bigcirc\sqcup\bigcirc\right)(x)+\left(\bigcirc\sqcup\protect\includegraphics[width=.045\linewidth,valign=c]{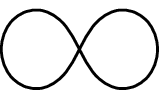}\right)(x)+\left(\bigcirc\sqcup\protect\includegraphics[width=.074\linewidth,valign=c]{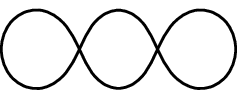}\right)(x)+\cdots+\left(\bigcirc\sqcup\protect\includegraphics[width=.11\linewidth,valign=c]{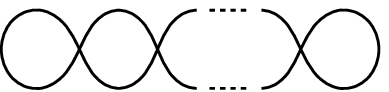}\right)(x),
\end{align*}
counts as expected the states which might result to that in \Figs{Fig:alphabeta}\ref{subfig:alpha1}.
\begin{figure}[ht]
\centering
\hspace*{\fill}
\subfigure[$2^r-1 $ states of the kind.]{\includegraphics[width=0.275\linewidth]{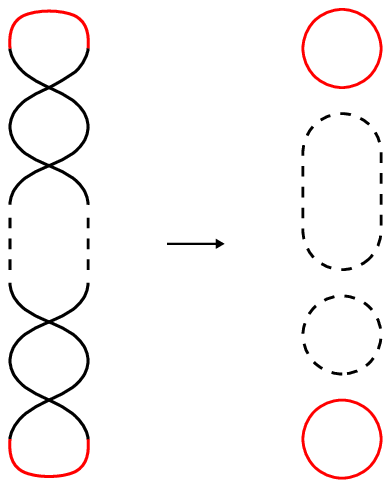}\label{subfig:alpha1}}\hfill%
\subfigure[One state of the kind.]{\includegraphics[width=0.275\linewidth]{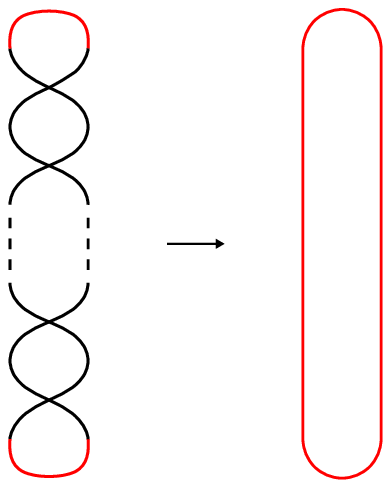}}
\hspace*{\fill}
\caption{Representatives of the $ r $-twist loop states.}
\label{Fig:alphabeta}
\end{figure}
\end{proof}

\begin{proposition}\label{prop:gp}
The generating polynomial for the Pretzel knot $ P_{n,r} $ is given by
\begin{align*}
{P}_{n,r}(x)&=\big(\upalpha_r(x)+x\big)^n+\left(x^2-1\right)\upalpha_r(x)^n.
\end{align*}
\end{proposition}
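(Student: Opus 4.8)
The plan is to invoke the general connected-sum closure formula \eqref{eq:gp2} and specialize it to the $r$-foil. Recall that $P_{n,r}$ is defined as $\overline{\left(F_r\right)_n}$, i.e., the closure of the $n$-fold connected sum of the $r$-foil $F_r$ with itself. The previous lemma established that the closure $\overline{F_r}(x) = x^2\upalpha_r(x) + x\upbeta_r(x)$ with $\upalpha_r(x) = \bigl((x+1)^r - 1\bigr)/x$ and $\upbeta_r(x) = 1$. By the decomposition \eqref{eq:alphabeta}, the associated (non-closed) polynomial satisfies $F_r(x) = x^2\upbeta_r(x) + x\upalpha_r(x)$, which is consistent with $F_r(x) = (x+1)^r + x^2 - 1$ from the first lemma. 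So the pair $(\upalpha_r, \upbeta_r)$ plays exactly the role of $(\upalpha, \upbeta)$ in the generic setup leading to \eqref{eq:gp1}--\eqref{eq:gp2}.

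First I would note that $\overline{\left(F_r\right)_n}$ is precisely $\overline{K_n}$ in the notation of item \eqref{item:Dn}, taking $K = F_r$. The recurrence \eqref{eq:gp1} and its closed form \eqref{eq:gp2} were derived for an arbitrary knot diagram $K$ with closure data $\upalpha, \upbeta$, and $F_r$ is such a diagram. Hence \eqref{eq:gp2} applies verbatim:
\begin{equation*}
\overline{\left(F_r\right)_n}(x) = \big(\upalpha_r(x) + x\upbeta_r(x)\big)^n + \left(x^2 - 1\right)\upalpha_r(x)^n.
\end{equation*}
Then I would substitute $\upbeta_r(x) = 1$ to collapse $\upalpha_r(x) + x\upbeta_r(x)$ into $\upalpha_r(x) + x$, yielding
\begin{equation*}
P_{n,r}(x) = \big(\upalpha_r(x) + x\big)^n + \left(x^2 - 1\right)\upalpha_r(x)^n,
\end{equation*}
which is the claimed identity.

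For completeness I would also want to check the base and degenerate cases, since the statement is meant to hold for all $n \geq 0$ (and the excerpt has fixed conventions for $r = 0$ and $n = 0$). For $n = 0$ the formula gives $1 + (x^2 - 1) = x^2$, matching the convention $\overline{\left(F_r\right)_0} = \bigcirc\bigcirc$ and the earlier observation $\overline{K_0}(x) = x^2$. For $r = 0$ we have $\upalpha_0(x) = 0$, so the formula gives $x^n$, matching $\overline{\left(F_0\right)_n} = P_{n,0}$, the disjoint union of $n$ unknots, whose polynomial is $x^n$ by property (ii). For $n = 1$ it returns $\upalpha_r(x) + x + (x^2 - 1)\upalpha_r(x) = x^2\upalpha_r(x) + x = T_r(x)$, consistent with $P_{1,r} = T_r$.

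The main (and essentially only) obstacle is simply confirming that the hypotheses under which \eqref{eq:gp2} was established are genuinely met here — that is, that $F_r$ legitimately decomposes as in \eqref{eq:alphabeta} with the stated $\upalpha_r, \upbeta_r$, which is exactly the content of the preceding lemma — so that the generic recurrence can be applied as a black box. Once that is granted, the proof is a one-line substitution $\upbeta_r \equiv 1$; there is no combinatorial work left to do, and the only thing worth spelling out is the handful of boundary cases above to reassure the reader that the conventions $F_0 = \bigcirc\bigcirc$ and $(F_r)_0 = \bigcirc$ are compatible with the closed form.
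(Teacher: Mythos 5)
Your proposal is correct and follows essentially the same route as the paper: the paper likewise feeds the data $\upalpha_r$, $\upbeta_r=1$ from the preceding lemma into the generic closure recurrence and concludes by \eqref{eq:gp2}. Your additional checks of the cases $n=0$, $n=1$ and $r=0$ are a harmless (and reasonable) supplement, not a different argument.
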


\begin{proof}
We write
\begin{equation*}
P_{n,r}(x)=\overline{\left(F_r\right)_{n}}(x) =\upalpha_{r}(x)P_{n-1,r}(x)+\left(F_r\right)_{n-1}(x).
\end{equation*}
and we conclude by \eqref{eq:gp2}.
\end{proof}

\begin{remark}
Since $\left(F_r\right)_n(x)= x\left(\upalpha_r(x)+x\right)^n $, and for some values of $ r $, we obtain the generating polynomials for the following knots \cite{Ramaharo}:
\begin{itemize}
\item  $\left(F_1\right)_n= x(x+1)^n  $, $ n $-twist loop \cite[\seqnum{A097805}, \seqnum{A007318}]{Sloane};
\item $\left(F_2\right)_n =  x(2x+2)^n $, \textit{$ n $-link} \cite[\seqnum{A038208}]{Sloane};
\item  $\left(F_3\right)_n =x(x^2+4x+3)  $ ,  \textit{$ n $-overhand knot} \cite[\seqnum{A299989}]{Sloane}.
\end{itemize}

\end{remark}
\section{Results}
In this section, we retrieve some of our previous results (case $ r=1,2,3 $) \cite{Ramaharo} which confirm  that the generating polynomial agrees with the construction in \Sec{sec:pretzel}.

\begin{enumerate}
\item  \textbf{Case {\boldmath $ r=0$}}.
\begin{enumerate}
\item Generating polynomial:
\begin{equation*}
P_{n,0}(x)=
\begin{cases}
x^2& \textit{if $ n=0 $};\\
x^n&\textit{if $  n\geq 1 $}.
\end{cases}
\end{equation*}
\item Coefficients table: \cite[\seqnum{A010054}, \seqnum{A023531}, \seqnum{A073424}  ($ n\geq 1 $,  all read as triangle)]{Sloane}
\begin{table}[H]
\centering
$\begin{array}{c|rrrrrrrrrrrrrrrr}
n\ \backslash\ k	&0	&1	&2	&3	&4	&5	&6&7\\
\midrule
0	&0	&0	&1	&	&	&	&	&			\\
1	&0	&1	&	&	&	&	&	&			\\
2	&0	&0	&1	&	&	&	&	&			\\
3	&0	&0	&0	&1	&	&	&	&			\\
4	&0	&0	&0	&0	&1	&	&	&			\\
5	&0	&0	&0	&0	&0	&1	&	&			\\
6	&0	&0	&0	&0	&0	&0	&1	&			\\
7	&0	&0	&0	&0	&0	&0	&0	&1		\\	
\end{array}$	
\caption{Values of $ p_0(n,k) $ for  $ 0\leq n\leq 8 $ and $ 0\leq k\leq 8 $.}
\label{Tab:0101}
\end{table}
\end{enumerate}

\item  \textbf{Case {\boldmath $ r=1 $}}.
\begin{enumerate}
\item Generating polynomial:
\begin{align*}
{P}_{n,1}(x)=(x+1)^n+x^2-1.
\end{align*}
\item Coefficients table: \cite[\seqnum{A007318} ($ 3\leq k\leq  n $ )]{Ramaharo, Sloane}
\begin{table}[H]
\centering
$\begin{array}{c|rrrrrrrrrrrrr}
n\ \backslash\ k	&0	&1	&2	&3	&4	&5	&6	&7	&8 &9 &10	\\
\midrule
0	&0	&0	&1	&	&	&	&	&	&		\\
1	&0	&1	&1	&	&	&	&	&	&		\\
2	&0	&2	&2	&	&	&	&	&	&		\\
3	&0	&3	&4	&1	&	&	&	&	&		\\
4	&0	&4	&7	&4	&1	&	&	&	&		\\
5	&0	&5	&11	&10	&5	&1	&	&	&		\\
6	&0	&6	&16	&20	&15	&6	&1	&	&		\\
7	&0	&7	&22	&35	&35	&21	&7	&1	&	\\
8	&0	&8	&29	&56	&70	&56	&28	&8	&1	\\
9	&0	&9	&37	&84	&126	&126	&84	&36	&9	&1	&	\\
10	&0	&10	&46	&120	&210	&252	&210	&120	&45	&10	&1\\
\end{array}$
\caption{Values of $ p_1(n,k)$ for $ 0\leq n\leq 10 $ and $ 0\leq k\leq 10$.}
\label{Tab:gpfoil}
\end{table}
\end{enumerate}

\item  \textbf{Case {\boldmath $ r=2 $}}.
\begin{enumerate}
\item Generating polynomial:
\begin{align*}
{P}_{n,2}(x)=\left(2x+2\right)^n+\left(x^2-1\right)\left(x+2\right)^n.
\end{align*}
\item Coefficients table: \cite[\seqnum{A300184}]{Sloane}
\begin{table}[H]
\centering
$\begin{array}{c|rrrrrrrrrrrr}
n\ \backslash\ k		 &0		 &1		 &2		 &3		 &4		 &5		 &6		 &7		 &8		 &9		 &10		 &11\\
\midrule
0	 &0	 &0	 	 &1	 &	 &	 &	 &	 &	 &	 &	 &	 &\\
1	 &0	 &1		 &2		 &1	 &	 &	 &	 &	 &	 &	 &	 &\\
2	 &0	 &4	 	 &7	  	 &4		 &1	 &	 &	 &	 &	 &	 &	 &\\
3	 &0	 &12	 &26	 &19	 &6		 &1	 &	 &	 &	 &	 &	 &\\
4	 &0	 &32	 &88	 &88	 &39	 &8		 &1	 &	 &	 &	 &	 &\\
5	 &0	 &80	 &272	 &360	 &1230	 &71	 &10	 &1	 &	 &	 &	 &\\
6	 &0	 &192	 &784	 &1312	 &1140	 &532	 &123	 &12	 &1	 &	 &	 &\\
7	 &0	 &448	 &2144	 &4368	 &4872	 &3164	 &1162	 &211	 &14	 &1	 &	 &\\
8	 &0	 &1024	 &5632	 &13568	 &18592	 &15680	 &8176	 &2480	 &367	 &16	 &1	 &\\
9	 &0	 &2304	 &14336	 &39936	 &65088	 &67872	 &46368	 &20304	 &5262	 &655	 &18	 &1
\end{array}$
\caption{Values of $ p_2(n,k) $ for $ 0\leq n\leq 9 $ and $ 0\leq k\leq 11 $.}
\label{tab:nlinkchain}
\end{table}
\end{enumerate}

\item  \textbf{Case {\boldmath $ r=3 $}}.
\begin{enumerate}
\item Generating polynomial:
\begin{align*}
{P}_{n,3}(x)=\left(x^2+4x+3\right)^n+\left(x^2-1\right)\left(x^2+3x+3\right)^n.
\end{align*}
\item Coefficients table: \cite[Table 14]{Ramaharo}
\begin{table}[H]
\centering
$\begin{array}{c|rrrrrrrrrrrrrrrr}
n\ \backslash\ k		 &0		 &1		 &2		 &3		 &4		 &5		 &6		 &7		 &8		 &9		 &10		 &11		 &12 \\
\midrule
0 &0 & 0 & 1\\
1 & 0 & 1 & 3 & 3 & 1\\
2 & 0 & 6 & 16 & 20 & 15 & 6 & 1\\
3 & 0 & 27 & 90 & 136 & 129 & 84 & 36 & 9 & 1\\
4 & 0 & 108 & 459 & 876 & 1021 & 832 & 501 & 220 & 66 & 12 & 1\\
5 & 0 & 405 & 2133 & 5085 & 7350 & 7321 & 5420 & 3103 & 1375 & 455 & 105 & 15 & 1\\
\end{array} $ 
\caption{Values of $ p_3(n,k) $ for $ 0\leq n\leq 5 $ and $ 0\leq k\leq 12 $.}
\label{tab:sinnet}
\end{table}
\end{enumerate}

\item  \textbf{Case {\boldmath $ r=n $}}.
\begin{enumerate}
\item Generating polynomial:
\begin{equation*}
P_{n,n}(x)=\left(\dfrac{(x+1)^n-1}{x}+x\right)^n+\left(x^2-1\right)\left(\dfrac{(x+1)^n-1}{x}\right)^n.
\end{equation*}
\item Coefficients table: 
\begin{table}[H]
\centering
\resizebox{\linewidth}{!}{%
$\begin{array}{c|rrrrrrrrrrrrrrrrr}
n\ \backslash\ k		 &0		 &1		 &2		 &3		 &4		 &5		 &6		 &7		 &8		 &9		 &10		 &11		 &12 	&13	&14\\
\midrule
0 & 0 & 0 & 1\\
1 & 0 & 1 & 1\\
2 & 0 & 4 & 7 & 4 & 1\\
3 & 0 & 27 & 90 & 136 & 129 & 84 & 36 & 9 & 1\\
4 & 0 & 256 & 1504 & 4336 & 8273 & 11744 & 13036 & 11488 & 8014 & 4368 & 1820 & 560 & 120 & 16 & 1\\
\end{array}$}
\caption{Values of $ p_n(n,k) $ for $ 0\leq n\leq 4 $ and $ 0\leq k\leq 14 $.}
\label{tab:Pnn}
\end{table}
\end{enumerate}

\item \textbf{Case {\boldmath $ k=1 $}}: $ p_r(n,1)=nr^{n-1},\ r\geq 1$ \cite[\seqnum{A104002} $ (1\leq r\leq n  ) $]{Sloane}, see \Tabs{tab:k1}.
\begin{table}[ht]
\centering
$\begin{array}{c|rrrrrrrrrrrrrrrrrrrrrrrrr}
n\ \backslash\ r		 &0		 &1		 &2		 &3		 &4		 &5		 &6		 &7	&8&9\\
\midrule
0 & 0 & 0 &     0&       0&        0&         0&          0&          0&           0&           0\\
1 & 1 & 1 &     1&       1&        1&         1&          1&          1&           1&           1\\
2 & 0 & 2 &     4&       6&        8&        10&         12&         14&          16&          18\\  
3 & 0 & 3 &    12&      27&       48&        75&        108&        147&         192&         243\\ 
4 & 0 & 4 &    32&     108&      256&       500&        864&       1372&        2048&        2916\\  
5 & 0 & 5 &    80&     405&     1280&      3125&       6480&      12005&       20480&       32805\\ 
6 & 0 & 6 &   192&    1458&     6144&     18750&      46656&     100842&      196608&      354294\\
7 & 0 & 7 &   448&    5103&    28672&    109375&     326592&     823543&     1835008&     3720087\\
8 & 0 & 8 &  1024&   17496&   131072&    625000&    2239488&    6588344&    16777216&    38263752\\
\end{array}$ 
\caption{Values of $ p_r(n,1) $ for $ 0\leq n\leq 8 $ and $ 0\leq r\leq 9 $ .}
\label{tab:k1}
\end{table}
\item \textbf{Case {\boldmath $ k=2 $}}: $ p_r(n,2)=\binom{n}{2}r^{n-2}\left(2\binom{r}{2}+1\right)+r^n,\ r\geq 1$, see \Tabs{tab:k2}.

\begin{table}[ht]
\centering
\resizebox{\linewidth}{!}{%
$\begin{array}{c|rrrrrrrrrrrrrrrrrrrrrrrrr}
n\ \backslash\ r		 &0		 &1		 &2		 &3		 &4		 &5		 &6		 &7		&8\\
\midrule
0	&1 &  1  &    1 &       1 &        1 &         1 &          1 &           1 &           1 \\ 
1	&0 &  1  &    2 &       3 &        4 &         5 &          6 &           7 &           8 \\ 
2	&1 &  2  &    7 &      16 &       29 &        46 &         67 &          92 &         121 \\ 
3	&0 &  4  &   26 &      90 &      220 &       440 &        774 &        1246 &        1880 \\ 
4	&0 &  7  &   88 &     459 &     1504 &      3775 &       7992 &       15043 &       25984 \\  
5	&0 & 11  &  272 &    2133 &     9344 &     29375 &      74736 &      164297 &      324608 \\
6	&0 & 16  &  784 &    9234 &    54016 &    212500 &     649296 &     1666294 &     3764224 \\ 
7	&0 & 22  & 2144 &   37908 &   295936 &   1456250 &    5342112 &    16000264 &    41320448 \\
8	&0 & 29  & 5632 &  149445 &  1556480 &   9578125 &   42177024 &   147414197 &   435159040 \\
9	&0 & 37  &14336 &  570807 &  7929856 &  61015625 &  322486272 &  1315198171 &  4437573632 \\
\end{array}$}
\caption{Values of $ p_r(n,2) $ for $ 0\leq n\leq 9 $ and $ 0\leq r\leq 8 $.}
\label{tab:k2}
\end{table}
\end{enumerate}

We observe the following formulas:
\begin{itemize}
\item First column in \Tabs{tab:k2} is $ 1,0,1 $ followed by $ 0,0,0,\ldots $ \cite[\seqnum{A154272}]{Sloane};
\item $ p_1(n,2)=\binom{n}{2}+1 $ \cite[\seqnum{A152947}]{Sloane};
\item $ p_0(2,0)=0$, $p_1(2,1)=2$, $p_2(2,2)=7$  and $p_n(2,n)=\binom{2n}{n} $ \cite[\seqnum{A000984}]{Sloane};
\item $ p_2(n,2)= (3n^2 - 3n + 8)2^{n-3} $ \cite[\seqnum{A300451}]{Sloane};
\item $ p_2(n,n)= 2n(n-1)+2^n-1 $ \cite[\seqnum{A295077}]{Sloane};
\item $ p_3(n,2n-1)=\binom{3n}{3} $ \cite[\seqnum{A006566}]{Sloane};
\item $ p_3(n,2n)=\binom{3n}{2} $ \cite[\seqnum{A062741}]{Sloane};
\item $ p_0(0,1)=0 $ and $ p_n(n,1)=n^n $ \cite[\seqnum{A000312}]{Sloane};
\item $ p_n(2,2)=2n^2 - n + 1$ \cite[\seqnum{A130883}]{Sloane};
\item $ p_n(n,n^2-n+1)=n^2 $ \cite[\seqnum{A000290}]{Sloane};
\item $ p_0(0,0)=p_1(1,0)=0 $, $ p_2(2,2)=7 $ and $ p_n(n,n^2-n)=\binom{n^2}{2} $ \cite[\seqnum{A083374}]{Sloane};
\item $ p_n(n,n^2-n-1)=\binom{n^2}{3} $ \cite[\seqnum{A178208}]{Sloane}.
\end{itemize}

\small \textbf{2010 Mathematics Subject Classifications}:  05A19;  57M25.
\end{document}